\newtheorem{thm}{Theorem}[section]
\newtheorem{lem}[thm]{Lemma}
\newtheorem{cor}[thm]{Corollary}
\newtheorem{exa}[thm]{Example}
\newtheorem{rmk}[thm]{Remark}
\title{On the dot product graph of a commutative ring II}
\author{Mohammad Abdulla and Ayman Badawi}
\address{{\bf Mohammad Abdulla} and {\bf Ayman Badawi}\ \ \mbox{(Corresponding Author)} \\Department of Mathematics and Statistics\\ The American University of Sharjah, P.O. Box 26666 \\ Sharjah, United Arab Emirates \\e-mail: abadawi@aus.edu \ \ \mbox{(Ayman Badawi)}}
\begin{document}

\begin{abstract}
In 2015, the second-named author introduced the {\it dot product graph} associated to a commutative ring $A$. Let $A$ be a commutative ring with nonzero identity, $1 \leq  n  < \infty$  be an integer,  and $R = A \times A \times \cdots \times A$ ($n$ times).  We recall that the {\it total dot product graph} of  $R$ is the (undirected) graph $TD(R)$ with vertices $R^* = R\setminus
\{(0, 0, ..., 0)\}$, and two distinct vertices $x$ and $y$ are adjacent if and
only if $x\cdot y = 0 \in A$ (where $x\cdot y$ denotes the normal dot product of $x$ and $y$). Let $Z(R)$ denote the set of all zero-divisors of $R$. Then the {\it zero-divisor dot product graph} of $R$ is the induced subgraph $ZD(R)$ of  $TD(R)$  with vertices $Z(R)^* = Z(R) \setminus \{(0, 0, ..., 0)\}$.  Let  $U(R)$ denote the set of all units of $R$. Then the {\it unit  dot product graph} of $R$ is the induced subgraph $UD(R)$ of  $TD(R)$  with vertices $U(R)$. In this paper, we study the structure  of $TD(R)$, $UD(R)$, and $ZD(R)$ when $A = Z_n$ or $A = GF(p^n)$, the finite field with $p^n$ elements, where $n \geq 2$ and $p$ is a prime positive integer.
\\[+2mm]
 \subjclassname{ Primary: 13A15; Secondary: 13B99; 05C99}\\
 {\bf Keywords}: dot product graph, annihilator graph, total graph, zero-divisor graph
 \end{abstract}

\maketitle{}

\section{Introduction}
Let  $R$  be a commutative ring with  $1 \not = 0$. Then $Z(R)$ denote the  set of zero-divisors of $R$ and the group of units of  $R$ will
be denoted by  $U(R)$. As usual, $Z_n$ denotes the ring of  integers modulo $n$. The nonzero elements of  $S \subseteq R$
will be denoted by $S^*$. Over the past several years, there has been considerable attention in the literature to associating graphs with commutative rings (and other algebraic structures) and studying the interplay between ring-theoretic and graph-theoretic properties (for example, see [1]--[20] and [22]--[26]). In particular, as in \cite{AL}, the {\it zero-divisor graph} of $R$ is the (simple) graph $\Gamma(R)$ with vertices $Z(R) \setminus \{0\}$, and distinct vertices $x$ and $y$ are adjacent if and only if $xy = 0$. This concept is due to Beck \cite{B}, who let all the elements of $R$ be vertices and was mainly interested in coloring. The zero-divisor  graph of a ring $R$ has been studied extensively by many authors.

In 2015, Badawi \cite{A1} introduced the {\it dot product graph} associated to a commutative ring $A$. Let $A$ be a commutative ring with nonzero identity, $1 \leq  n  < \infty$  be an integer,  and $R = A \times A \times \cdots \times A$ ($n$ times).  We recall from [1] that the {\it total dot product graph} of  $R$ is the (undirected) graph $TD(R)$ with vertices $R^* = R\setminus \{(0, 0, ..., 0)\}$, and two distinct vertices $x$ and $y$ are adjacent if and only if $x\cdot y = 0 \in A$ (where $x\cdot y$ denotes the normal dot product of $x$ and $y$). Let $Z(R)$ denote the set of all zero-divisors of $R$. Then the {\it zero-divisor dot product graph} of $R$ is the induced subgraph $ZD(R)$ of  $TD(R)$  with vertices $Z(R)^* = Z(R) \setminus \{(0, 0, ..., 0)\}$.  Let  $U(R)$ denote the set of all units of $R$. Then the {\it unit  dot product graph} of $R$ is the induced subgraph $UD(R)$ of  $TD(R)$  with vertices $U(R)$. Let $p \geq 2$ be a prime integer, $n \geq 1$, $A = GF(p^n)$ be the finite field with $p^n$ elements, and $R = A\times A$. In section two of this paper, we study  the structure of $ZD(R)$, $UD(R)$, and $TD(R)$. Let $n \geq  2$, $A = Z_n$, and $R = A \times A$. In section three of this paper, we study the structure of $UD(R)$. In section four, we study some induced subgraphs of $ZD(Z_n \times Z_n)$, where $n \geq 2$. In section five, we introduce the {\it equivalence unit dot product} of $R$, $EUD(R)$ and we show that $UD(R)$, can be recovered from $EUD(R)$.

Let  $G$  be a graph. Two vertices $v_1, v_2$ of $G$  are said to be {\it adjacent}  in $G$  if $v_1, v_2$ are connected by an edge (line segment) of $G$ and we write $v_1- v_2$.   A finite sequence  of edges from a vertex $v_1$ of $G$ to a vertex $v_2$ of $G$ is called a {\it path} of $G$ and we write $v_1 - a_1 - a_2 - \cdots - a_k - v_2$, where $k < \infty $ and the $a_i$, $1 \leq i \leq k$,  are some distinct vertices of $G$. Hence it is clear that every edge of $G$ is a path of $G$, but not every path of $G$ is an edge of $G$. We say that  $G$ is {\it connected} if there is a path between any two distinct vertices of  $G$. At the
other extreme, we say that  $G$  is {\it totally disconnected} if no two vertices of  $G$  are adjacent.  We denote the
complete graph on  $n$ vertices by  $K_n$, recall that a graph $G$ is called complete if every two vertices of $G$ are adjacent and the complete
bipartite graph on $m$ and $n$  vertices by $K_{m,n}$ (recall that $K_{m, n}$ is the graph with two sets of vertices, say $V_1, V_2$, such that $|V_1| = n, |V_2| = m$, $V_1 \cap V_2 = \emptyset$, every two vertices in $V_1$ are not adjacent, every two vertices in $V_2$ are not adjacent, and every vertex in $V_1$ is adjacent to every vertex in $V_2$). We will sometimes call a
$K_{1,n}$  a  {\it star graph}. We say that two (induced)
subgraphs $G_1$ and $G_2$ of $G$ are {\it disjoint} if $G_1$ and
$G_2$ have no common vertices and no vertex of $G_1$ (resp.,
$G_2$) is adjacent  (in $G$) to any  vertex not in  $G_1$ (resp.,
$G_2$). Assume that a graph $G = G_1 \cup G_2 \cup \cdots \cup G_n$, where each vertex of $G_i$ is not connected to a vertex of $G_j$ for every $1\leq i, j \leq n$ with $i \not = j$. Then we say that $G$ is the {\it disjoint union} of $G_1, ... , G_n$.

\section{The structure of  $UD(R = A\times A)$ when  $A$ is a field}
Let  $p$ be a positive prime number, $n \geq 2$. Then $A = GF(p^n)$ denotes the finite field with $p^n$ elements. Let $R = A \times A$. Then  $TD(R)$ is not connected by \cite[Theorem 2.1]{A1}. The first two results give a complete description of the structure of $UD(R)$ and $TD(R)$.
\begin{thm}\label{t1} Let  $n \geq 1$, $m = 2^n - 1$  and  $R = GF(2^n) \times GF(2^n)$. Then
\begin{enumerate}
\item $ZD(R) = \Gamma(R) = K_{m, m}$.
\item $UD(R)$ is  the disjoint union of one  $K_m$ and $(2^{(n - 1)} - 1) $  \ $K_{m, m}$'s.
\item $TD(R)$ is the disjoint  union of one  $K_m$ and $2^{(n - 1)}$  \ $K_{m, m}$'s.
  \end{enumerate}
    \end{thm}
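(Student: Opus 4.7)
Let me write $A = GF(2^n)$ and $m = 2^n - 1$, and handle the three claims in order. For (1), the zero-divisors split as $Z(R)^* = \{(a,0) : a \in A^*\} \cup \{(0,a) : a \in A^*\}$, two disjoint sets of size $m$. Within each set the dot product is a product of two nonzero field elements, hence nonzero, so each side is independent; across the two sides $(a,0)\cdot(0,b) = 0$, so all $m^2$ cross-pairs are edges. This exhibits $ZD(R) = \Gamma(R) = K_{m,m}$.

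For (2), I would parametrize units by their ratio. Every unit $(a,b) \in A^* \times A^*$ has a well-defined ratio $a/b \in A^*$, giving a partition $U(R) = \bigsqcup_{t \in A^*} V_t$ with $V_t = \{(tb,b) : b \in A^*\}$ and $|V_t| = m$. Using $-1 = 1$ in characteristic $2$, the dot product of $(tb,b) \in V_t$ and $(sd,d) \in V_s$ computes to $bd(ts+1)$. Since $bd \ne 0$, adjacency between $V_s$ and $V_t$ is all-or-nothing: every pair is an edge exactly when $ts = 1$, and no pair is an edge otherwise. In particular, within a single $V_t$ all pairs of distinct elements are adjacent iff $t^2 = 1$.

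The main obstacle — and the characteristic-$2$ phenomenon driving the shape of the answer — is that $t^2 = 1$ in $A$ forces $t = 1$, since $(t-1)^2 = t^2 - 1$. Hence $V_1$ is the unique class paired with itself: it induces a $K_m$, and this ``extra'' clique disappears in odd characteristic. The remaining $m-1 = 2^n - 2$ values of $t$ split into $2^{n-1} - 1$ reciprocal pairs $\{t, t^{-1}\}$ with $t \ne t^{-1}$, and each union $V_t \cup V_{t^{-1}}$ induces a $K_{m,m}$; distinct reciprocal pairs have no edges between them because the ratios multiply to $1$ in only one way, and $V_1$ is isolated from every other $V_t$ for the same reason. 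This proves (2). Finally, for (3), I would check that $Z(R)^*$ and $U(R)$ are disconnected in $TD(R)$: a unit $(b,c)$ dotted with $(a,0)$ or $(0,a)$ gives $ab$ or $ac$, a nonzero field element. So $TD(R) = ZD(R) \sqcup UD(R)$, contributing one $K_m$ and $1 + (2^{n-1} - 1) = 2^{n-1}$ copies of $K_{m,m}$.
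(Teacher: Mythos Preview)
Your proof is correct and follows essentially the same approach as the paper: partition $U(R)$ into the lines $\{u(1,a):u\in A^*\}$ (your $V_t$ with $t=a^{-1}$), observe that adjacency between two classes is all-or-nothing and governed by whether the two parameters multiply to $-1=1$, and use the characteristic-$2$ fact that $t^2=1$ forces $t=1$ to isolate the single $K_m$. The only real difference is that for (1) and (3) the paper simply cites external references (\cite{A1,AL,AM}) and says ``follows from (1) and (2)'', whereas you supply direct self-contained arguments---in particular your explicit check that no unit is adjacent to any nonzero zero-divisor is exactly what is needed to justify that $TD(R)$ is the \emph{disjoint} union of $ZD(R)$ and $UD(R)$.
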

    \begin{proof}
    {\bf (1)}. The result is clear by \cite[Theorem 2.1]{A1}, \cite[Theorem 2.1]{AL}, and \cite[Theorem 2.2]{AM}.

    {\bf (2)}. Let $A = GF(2^n)$. Then $R = A \times A$. Let $v_1, v_2  \in U(R)$.  Since $R$ is a vector space over $A$, $v_1 = u(1, a) \in R$ and $v_2 = v(1, b) \in R$ for some  $u, v, a, b \in A^*$. Hence $v_1$ is adjacent to $v_2$ if and only if $v_1 \cdot v_2 = uv + uvab = 0$ in $A$  if and only if $b = -a^{-1} = a^{-1}$ in $A$ .  Thus for each $a \in U(A) = A^*$, let $X_a = \{u(1, a) \mid u \in A^*\}$ and $Y_a = \{u(1, a^{-1}) \mid u \in A^*\}$.  It is clear that $|X_a| = |Y_a| = 2^n - 1$. Let $a = 1$. Since $char(A)= char(R)= 2$, $X_a = Y_a$ and the dot product of every two distinct vertices in $X_a$ is zero. Hence every two distinct  vertices in $X_a$ are adjacent.  Thus the vertices in $X_a$ form the graph $K_m$  that is  a complete subgraph of  $TD(R)$. Let $a \in U(A)$ such that $a \not = 1$. Since  $a^2 \not = 1$ for each $ a \in U(A) \setminus\{1\}$, we have $X_a \cap Y_a = \emptyset$, every two distinct vertices in $X_a$ are not adjacent, and every two distinct vertices in $Y_a$ are not adjacent. Since $char(A) = char(R) = 2$, it is clear that every vertex in $X_a$ is adjacent to every  vertex in $Y_a$. Thus the vertices in $X_a \cup Y_a$ form the graph $K_{m, m}$ that is a complete bi-partite subgraph of $TD(R)$. By construction, there are exactly $(2^n - 2)/ 2 = 2^{n - 1} - 1$ disjoint complete bi-partite $K_{m, m}$ subgraphs of $TD(R)$. Hence $UD(R)$ is the disjoint union  of one complete subgraph $K_m$ and $(2^{n - 1} - 1 )$   complete bi-partite $K_{m, m}$ subgraphs.

   {\bf (3)}. The claim follows from (1) and (2).

    \end{proof}

    \begin{thm}\label{t2}  Let $p \geq 3$ be a positive prime integer, $n \geq 1$, $m = p^n - 1$, and let  $R = GF(p^n) \times GF(p^n)$. Then
\begin{enumerate}
\item $ZD(R) = \Gamma(R) = K_{m, m}$.
\item If $4 \nmid m$, then $UD(R)$ is  the disjoint union  of  $m/2$  $K_{m, m}$'s.
    \item If $4 \mid m$, then $UD(R)$ is  the disjoint  union of two $K_m$'s  and $(m - 2)/2$  \ $K_{m, m}$'s.
        \item If $4\nmid m$, then $TD(R)$ is the disjoint union  of $(m +2)/2$    $K_{m, m}$'s.
        \item If $4\mid m$, then $TD(R)$ is the disjoint union of two $K_m$'s and $m/2$  \ $K_{m, m}$'s.
            \end{enumerate}
    \end{thm}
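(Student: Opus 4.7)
Part (1) should drop out immediately from the same three citations used for Theorem \ref{t1}(1), so the work lies in (2)--(5). For (2) and (3), I would follow the template of Theorem \ref{t1}(2): every unit of $R = A \times A$ with $A = GF(p^n)$ can be written as $u(1,a)$ for $u,a \in A^*$, and $u(1,a)\cdot v(1,b) = uv(1+ab)$ vanishes iff $b = -a^{-1}$. Setting $X_a = \{u(1,a): u \in A^*\}$ gives $m$ pairwise disjoint sets of size $m$ partitioning $U(R)$, and adjacency across $X_a$ and $X_b$ holds exactly when $b = -a^{-1}$. The structure of $UD(R)$ is therefore governed by the involution $\tau: a \mapsto -a^{-1}$ of $A^*$.

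The central observation is that $\tau$ has a fixed point iff $a^2 = -1$ is solvable in $A^*$, and since $A^*$ is cyclic of order $m$, $-1$ (the unique element of order $2$) is a square iff $m/2$ is even, that is, iff $4 \mid m$. When $4 \nmid m$, $\tau$ is fixed-point-free and partitions $A^*$ into $m/2$ unordered pairs $\{a,-a^{-1}\}$. For each pair I would check: $X_a \cap X_{-a^{-1}} = \emptyset$; no two distinct vertices of $X_a$ are adjacent (because $1 + a^2 \neq 0$); every vertex of $X_a$ is adjacent to every vertex of $X_{-a^{-1}}$. This produces one $K_{m,m}$ per pair, giving (2). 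When $4 \mid m$, pick $a_0$ with $a_0^2 = -1$; since $p$ is odd, $a_0 \neq -a_0$, so $\tau$ has exactly the two fixed points $\pm a_0$. On $X_{\pm a_0}$ the identity $1 + a_0^2 = 0$ forces every two distinct vertices to be adjacent, yielding two disjoint copies of $K_m$, while the remaining $m-2$ values of $a$ pair up to give $(m-2)/2$ copies of $K_{m,m}$, proving (3).

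For (4) and (5) the remaining task is to glue $UD(R)$ with $ZD(R)$. Since $A$ is a field, $R^* = U(R) \sqcup Z(R)^*$, and no edge of $TD(R)$ crosses this partition: if $u=(u_1,u_2)\in U(R)$ and $z=(z_1,z_2)\in Z(R)^*$ with (WLOG) $z_2=0,\ z_1\neq 0$, then $u\cdot z = u_1 z_1 \neq 0$. Thus $TD(R)$ is the graph-theoretic disjoint union $UD(R)\sqcup ZD(R)$, and invoking (1) to recognize $ZD(R) = K_{m,m}$ turns (2) into (4) and (3) into (5) by incrementing the $K_{m,m}$-count by one in each case.

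The main obstacle I anticipate is the parity bookkeeping tied to the dichotomy $4\mid m$ vs.\ $4\nmid m$: one has to justify cleanly, via the cyclic structure of $A^*$, that solvability of $a^2 = -1$ is equivalent to $4 \mid m$, and simultaneously use $p$ odd to ensure $a_0 \neq -a_0$ so that the two fixed points of $\tau$ contribute two genuinely distinct $K_m$'s rather than collapsing into one. Everything else is routine dot-product computation parallel to Theorem \ref{t1}(2).
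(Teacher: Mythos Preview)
Your proposal is correct and follows essentially the same approach as the paper: write units as $u(1,a)$, reduce adjacency to $b=-a^{-1}$, and split into cases according to whether $a^2=-1$ has solutions in $A^*$ (the paper phrases this as ``elements of order $4$'' rather than ``fixed points of $\tau$,'' but the content is identical). Your treatment of (4)--(5) is in fact slightly more explicit than the paper's, which simply invokes (1) together with (2) or (3) without spelling out the absence of edges between $U(R)$ and $Z(R)^*$.
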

\begin{proof}
{\bf (1)}. The result is clear by  \cite[Theorem 2.1]{A1}, \cite[Theorem 2.1]{AL}, and \cite[Theorem 2.2]{AM}.

    {\bf (2)} Let $A = GF(p^n)$. Then $R = A \times A$. Let $v_1, v_2  \in U(R)$.  Since $R$ is a vector space over $A$, $v_1 = u(1, a) \in R$ and $v_2 = v(1, b) \in R$ for some  $u, v, a, b \in A^*$. Hence $v_1$ is adjacent to $v_2$ if and only if $v_1 \cdot v_2 = uv + uvab = 0$ in $A$  if and only if $b = -a^{-1}$ in $A$. Since $R$ is a vector space over $A$, for each $a \in U(A) = A^*$, let $X_a = \{u(1, a) \mid u \in A^*\}$ and $Y_a = \{u(1, -a^{-1}) \mid u \in A^*\}$.  It is clear that $|X_a| = |Y_a| = m = p^n - 1$. Since $4\nmid m$, $U(A) = A^*$ has no elements of order 4. Thus $a^2 \not = -1$ for each $ a \in U(A)$. Hence $X_a \cap Y_a = \emptyset$; so every two distinct vertices in $X_a$ are not adjacent, and every two distinct vertices in $Y_a$ are not adjacent.  By construction of $X_a$ and $Y_a$, it is clear that every vertex in $X_a$ is adjacent to every  vertex in $Y_a$. Thus the vertices in $X_a \cup Y_a$ form the graph $K_{m, m}$ that is a complete bi-partite subgraph of $TD(R)$. By construction, there are exactly $m/ 2$  disjoint complete bi-partite $K_{m, m}$ subgraphs of $TD(R)$.   Hence $UD(R)$ is the disjoint union  of  $m/2$  $K_{m, m}$'s.

     {\bf (3)}. Note that $|U(A)| = m$.  Since $U(A) = A^*$ is cyclic and $4 \mid m$, $U(A)$ has exactly one subgroup of order $4$. Thus $U(A)$ has exactly two elements of  order 4, say $b, c$.  Since $a \in U(A)$ is of order 4  if and only if $a^2 = -1$, it is clear that $x^2 = -1$ for some $x \in U(A)$ if and only if $x = b, c$. Let $X_b = \{u(1, b) \mid u \in U(A)\}$ and let $X_c = \{u(1, c) \mid u \in U(A)\}$. It is clear that $|X_b| = |X_c| = m$. Let $H = \{b, c\}$.  Then  the dot product of every two distinct vertices in $X_h$ is zero for each $h \in H$.  Thus every two distinct  vertices in $X_h$ are adjacent for every $h \in H$.  Thus for each $h \in H$, the vertices in $X_h$ form the graph $K_m$  that is  a complete subgraph of  $TD(R)$. Let $a \in U(A) \setminus H$, $X_a = \{u(1, a) \mid u \in A^*\}$, and $Y_a = \{u(1, -a^{-1}) \mid u \in A^*\}$. It is clear that $|X_a| = |Ya| = m$.  Since $a \not \in H$, we have  $X_a \cap Y_a = \emptyset$, every two distinct vertices in $X_a$ are not adjacent, and every two distinct vertices in $Y_a$ are not adjacent. By construction, it is clear that every vertex in $X_a$ is adjacent to every  vertex in $Y_a$. Thus the vertices in $X_a \cup Y_a$ form the graph $K_{m, m}$ that is a complete bi-partite subgraph of $TD(R)$. By construction, there are $(m - 2)/2$ disjoint \ $K_{m, m}$ subgraphs. Hence $UD(R)$ is the disjoint union  of two  $K_m$'s  and $(m - 2)/2$  \ $K_{m, m}$'s.

   {\bf (4)}. The claim follows from (1) and (2).

   {\bf (5)}. The claim follows from (1) and (3).

    \end{proof}

    In view of  Theorem \ref{t2}, we have the following corollary.
\begin{cor}
\label{c1} Let $p \geq 3$ be a prime positive integer, and let $R$ = $\mathbb{Z}_p \times \mathbb{Z}_p$. Then
 \begin{enumerate}
\item $ZD(R)\ = \Gamma(R)$= $K_{p-1,p-1}$ .

\item If $4\nmid p - 1$, then $UD(R)$ is the disjoint union  of $(p - 1)/2$   $K_{p-1,p-1}$.
\item If $4\mid p - 1$, then $UD(R)$ is the disjoint union  of two   $K_{p - 1}$'s  and $(p -  3)/2$  \ $K_{p - 1, p - 1}$'s.
\item If $4\nmid p - 1$, then $TD(R)$ is the disjoint union  of $(p + 1)/2$    $K_{p - 1, p - 1}$'s.
        \item If $4\mid p - 1$, then $TD(R)$ is the disjoint union  of two  $K_{p - 1}$'s and $(p - 1)/2$    \ $K_{p - 1, p - 1}$'s.
\end{enumerate}
\end{cor}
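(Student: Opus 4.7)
The plan is to observe that Corollary~\ref{c1} is nothing more than the specialization of Theorem~\ref{t2} to the case $n = 1$. Since $p$ is a prime, $\mathbb{Z}_p = GF(p) = GF(p^1)$ is a finite field, so the hypotheses of Theorem~\ref{t2} are met with $n = 1$, which makes $m = p^n - 1 = p - 1$. Under this identification, the condition ``$4 \mid m$'' from the theorem becomes precisely the condition ``$4 \mid p - 1$'' in the corollary, and likewise for the negation.

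First I would invoke Theorem~\ref{t2}(1) to obtain statement (1) directly, since $K_{m,m}$ with $m = p-1$ is $K_{p-1,p-1}$. For statement (2), I would apply Theorem~\ref{t2}(2) under $4 \nmid p-1$, rewriting the count $m/2$ of $K_{m,m}$ blocks as $(p-1)/2$ copies of $K_{p-1,p-1}$. For (3), I would use Theorem~\ref{t2}(3) under $4 \mid p-1$, where the two $K_m$'s become two $K_{p-1}$'s and the $(m-2)/2 = (p-3)/2$ copies of $K_{m,m}$ become $(p-3)/2$ copies of $K_{p-1,p-1}$. Statements (4) and (5) follow identically from Theorem~\ref{t2}(4) and (5), with the bipartite block counts simplifying to $(m+2)/2 = (p+1)/2$ and $m/2 = (p-1)/2$ respectively.

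Because the argument is a pure substitution rather than a new construction, there is essentially no obstacle. The only point that requires a small amount of care is the arithmetic bookkeeping $m = p - 1$ in each of the five cases; no additional ring-theoretic or graph-theoretic analysis is needed beyond what Theorem~\ref{t2} has already supplied. I would therefore present the proof as a single short paragraph that names the correspondence $(n,m) = (1, p-1)$ and then enumerates the five parts by direct citation.
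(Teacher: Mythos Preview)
Your proposal is correct and matches the paper's approach exactly: the paper provides no separate proof for Corollary~\ref{c1} at all, merely introducing it with the phrase ``In view of Theorem~\ref{t2}, we have the following corollary,'' which is precisely the specialization $n=1$, $m=p-1$ that you describe.
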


\begin{exa}
Let $A =  \frac{Z_2[X]}{(X^2 + X + 1)}$. Then $A$ is a finite field with 4 elements.  Let  $v = X + (X^2 + X + 1) \in A$. Since $(A^*, .)$ is a cyclic group and $A^* = <v>$, we have $A = \{0, v, v^2, v^3 = 1 + (X^2 + X + 1)\}$ . Let  $R = A\times A$. Then the $UD(R)$ is the disjoint union  of  one $K_3$ and one $K_{3, 3}$ by Theorem \ref{t1}(2). The following is the graph of  $UD(R)$.
\vskip0.2in
\begin{tikzpicture}[y=.2cm, x=.2cm,font=\sffamily]
\draw (0,0) -- (0,10);
\draw (0,0) -- (10,10);
\draw (0,10) -- (10, 10);
\draw (20, 10) -- (20, 0);
\draw (20, 10) -- (30, 0);
\draw (20, 10) -- (40, 0);
\draw (30, 10) -- (20, 0);
\draw (30, 10) -- (30, 0);
\draw (30, 10)-- (40, 0);
\draw (40, 10) -- (20, 0);
\draw (40, 10) -- (30, 0);
\draw (40, 10) -- (40, 0);
\filldraw[fill=blue!40,draw=blue!80] (0,0) circle (3pt)    node[anchor= north] {$(v, v)$};
\filldraw[fill=blue!40,draw=blue!80] (0,10) circle (3pt)    node[anchor= south] {$(v^2, v^2)$};
\filldraw[fill=blue!40,draw=blue!80] (10, 10) circle (3pt)    node[anchor=south] {$(v^3,  v^3)$};

\filldraw[fill=blue!40,draw=blue!80] (20, 10) circle (3pt)    node[anchor=south] {$(v^3, v)$};
\filldraw[fill=blue!40,draw=blue!80] (30,10) circle (3pt)    node[anchor=south] {$(v, v^2)$};
\filldraw[fill=blue!40,draw=blue!80] (40, 10) circle (3pt)    node[anchor=south] {$(v^2,  v^3)$};

\filldraw[fill=blue!40,draw=blue!80] (20,0) circle (3pt)    node[anchor=north] {$(v^3, v^2)$};
\filldraw[fill=blue!40,draw=blue!80] (30,0) circle (3pt)    node[anchor=north] {$(v, v^3)$};
\filldraw[fill=blue!40,draw=blue!80] (40, 0) circle (3pt)    node[anchor=north] {$(v^2, v)$};
\end{tikzpicture}
\end{exa}
\begin{exa}
Let $A =  Z_5$ and  $R = A\times A$. Then $UD(R)$ is the disjoint union  of  two  $K_4$ and one $K_{4, 4}$ by Corollary \ref{c1}(3). The following is the graph of  $UD(R)$.
\vskip0.2in
\begin{tikzpicture}[y=.2cm, x=.2cm,font=\sffamily]
\draw (0,0) -- (0,10);
\draw (0,0) -- (6,10);
\draw (0,0) -- (6,0);
\draw (0,10) -- (6,10);
\draw (0,10) -- (6,0);
\draw (6, 0) -- (6,10);
\draw (12,0) -- (12,10);
\draw (12,0) -- (18,10);
\draw (12,0) -- (18,0);
\draw (12,10) -- (18,10);
\draw (12,10) -- (18,0);
\draw (18, 0) -- (18,10);
\draw (24,10) -- (24,0);
\draw (24,10) -- (32,0);
\draw (24,10) -- (40,0);
\draw (24,10) -- (48,0);
\draw (32,10) -- (24,0);
\draw (32,10) -- (32,0);
\draw (32,10) -- (40,0);
\draw (32,10) -- (48,0);
\draw (40,10) -- (24,0);
\draw (40,10) -- (32,0);
\draw (40,10) -- (40,0);
\draw (40,10) -- (48,0);
\draw (48,10) -- (24,0);
\draw (48,10) -- (32,0);
\draw (48,10) -- (40,0);
\draw (48,10) -- (48,0);
\filldraw[fill=blue!40,draw=blue!80] (0,0) circle (3pt)    node[anchor=north] {$(1, 2)$};
\filldraw[fill=blue!40,draw=blue!80] (0,10) circle (3pt)    node[anchor=south] {$(2, 4)$};
\filldraw[fill=blue!40,draw=blue!80] (6,10) circle (3pt)    node[anchor=south] {$(3, 1)$};
\filldraw[fill=blue!40,draw=blue!80] (6,0) circle (3pt)    node[anchor=north] {$(4, 3)$};

\filldraw[fill=blue!40,draw=blue!80] (12,0) circle (3pt)    node[anchor=north] {$(1, 3)$};
\filldraw[fill=blue!40,draw=blue!80] (12,10) circle (3pt)    node[anchor=south] {$(2, 1)$};
\filldraw[fill=blue!40,draw=blue!80] (18,10) circle (3pt)    node[anchor=south] {$(3, 4)$};
\filldraw[fill=blue!40,draw=blue!80] (18,0) circle (3pt)    node[anchor=north] {$(4, 2)$};

\filldraw[fill=blue!40,draw=blue!80] (24,10) circle (3pt)    node[anchor=south] {$(1, 1)$};
\filldraw[fill=blue!40,draw=blue!80] (32,10) circle (3pt)    node[anchor=south] {$(2, 2)$};
\filldraw[fill=blue!40,draw=blue!80] (40,10) circle (3pt)    node[anchor=south] {$(3, 3)$};
\filldraw[fill=blue!40,draw=blue!80] (48,10) circle (3pt)    node[anchor=south] {$(4, 4)$};

\filldraw[fill=blue!40,draw=blue!80] (24,0) circle (3pt)    node[anchor=north] {$(1, 4)$};
\filldraw[fill=blue!40,draw=blue!80] (32,0) circle (3pt)    node[anchor=north] {$(2, 3)$};
\filldraw[fill=blue!40,draw=blue!80] (40,0) circle (3pt)    node[anchor=north] {$(3, 2)$};
\filldraw[fill=blue!40,draw=blue!80] (48,0) circle (3pt)    node[anchor=north] {$(4, 1)$};
\end{tikzpicture}
\end{exa}

\section{Unit dot product graph of $R = Z_n \times Z_n$}

Let $n >1$ and write  $n = p_1^{k_1} \cdots p_m^{k_m}$, where the $p_i$'s are distinct prime positive integers. Then $U(Z_n) = \{ 1 \le a < n \mid $a is an integer and $gcd(a, n) = 1\}$. It is known that  $U(Z_n)$ is a group under multiplication modulo $n$  and  $|U(Z_n)| = \phi(n) = (p_1 - 1)p_1^{k_1 - 1}(p_2 - 1)p_2^{k_2 - 1} \cdots (p_m - 1)p_m^{k_m - 1}$.

\vskip0.in

The following lemma is needed.

\begin{lem} \label{l1}
Let $n$ be a positive integer and write $n = p_1^{k_1}p_2^{k_2} \cdots p_r^{k_r}$, where the $p_i$'s are distinct prime positive integers. Then
\begin{enumerate}
\item  If  $4 \mid  n$, then $a^2 \not \equiv  n-1$ \ (mod \ n) for each $a \in U(Z_n)$.
\item  If $4\nmid n $, then $x^2 \equiv n - 1$ \ (mod \ n) has a solution in $U(Z_n)$ if and only if  $4 \mid (p_i - 1)$ for each  odd prime factor $p_i$ of $n$. Furthermore, if  $x^2 \equiv n -1 \ (mod \ n)$ has a solution in $U(Z_n)$, then it has exactly $2^{r-1}$ distinct solutions in $U(Z_n)$ if $n$ is even and  it has  exactly $2^r$  distinct solutions in $U(Z_n)$ if $n$ is odd.
        \end{enumerate}
        \end{lem}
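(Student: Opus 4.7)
The plan is to rewrite $n-1$ as $-1$ and analyze the congruence $x^2 \equiv -1 \pmod n$ by reducing it to the prime power factors via the Chinese Remainder Theorem, handling the prime $2$ separately from the odd primes.

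For part (1), I would observe that $4 \mid n$ forces $n-1 \equiv 3 \pmod 4$, whereas any $a \in U(Z_n)$ satisfies $\gcd(a,n)=1$ and hence is odd, so $a^2 \equiv 1 \pmod 4$. Thus $a^2 \not\equiv n-1 \pmod 4$, and therefore certainly not modulo $n$. This takes one short paragraph.

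For part (2), the idea is to use the ring isomorphism $Z_n \cong Z_{p_1^{k_1}} \times \cdots \times Z_{p_r^{k_r}}$ from CRT, which restricts to $U(Z_n) \cong \prod_i U(Z_{p_i^{k_i}})$. Under this identification, $x^2 \equiv -1 \pmod n$ holds if and only if $x^2 \equiv -1 \pmod{p_i^{k_i}}$ for every $i$, so the equation is globally solvable exactly when it is solvable at every prime power. Since $4 \nmid n$, the only way $p_i = 2$ can occur is with $k_i = 1$, in which case the congruence $x^2 \equiv 1 \pmod 2$ has the single solution $x=1$. For an odd prime $p_i$, the group $U(Z_{p_i^{k_i}})$ is cyclic of order $p_i^{k_i-1}(p_i-1)$, and a solution of $x^2 = -1$ corresponds to an element of order exactly $4$ (since $-1$ is the unique element of order $2$ in a cyclic group of even order). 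Such an element exists if and only if $4$ divides the group order, which, because $p_i$ is odd, is equivalent to $4 \mid (p_i-1)$; and when it exists there are exactly $\phi(4)=2$ such elements.

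Combining the local counts by CRT gives the solvability criterion and the exact count: if $n$ is odd then all $r$ prime factors contribute a factor of $2$, giving $2^r$ solutions, while if $n$ is even (so one $p_i=2$ contributes a single solution and the other $r-1$ odd prime factors each contribute $2$) the total is $2^{r-1}$. There is no serious obstacle here; the only mild subtleties are checking that $-1$ really is a unit (immediate), recognizing that a square root of $-1$ is precisely an element of order $4$ in the cyclic group $U(Z_{p_i^{k_i}})$ for odd $p_i$, and keeping the bookkeeping for the even versus odd cases of $n$ straight.
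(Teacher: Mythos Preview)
Your proposal is correct and follows essentially the same approach as the paper: both reduce the congruence $x^2\equiv -1\pmod n$ to local conditions at the prime factors, identify solutions with elements of order~$4$ in the cyclic unit group, and multiply local counts via the Chinese Remainder Theorem. The only difference is that the paper outsources these steps to \cite[Theorems~5.1 and~5.2]{WL} (reducing to residues modulo each prime $p_i$), whereas you work modulo each prime power $p_i^{k_i}$ and give the argument in a self-contained way.
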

       \begin{proof}{\bf (1)}. Suppose that  $4\mid n$. Then  $n \geq 4$. Since  $4 \nmid (n-2)$, $n -1 \not \equiv 1 \ (mod \ 4)$ and  thus $a^2 \not \equiv  n-1$ \ (mod \ n) for each $a \in U(Z_n)$ by \cite[Theorem 5.1]{WL}.

       {\bf (2)}. Suppose that  $4 \nmid n$.  Then $a^2 \equiv  n - 1 \ (mod \ n)$ for some  $a \in U(Z_n)$ if and only if  $a^2  \equiv n - 1 \ (mod \ p_i)$ \  for each odd  prime factor $p_i$ of $n$ by \cite[Theorem 5.1]{WL}.  Thus  $a^2 \equiv  n - 1 \ (mod \ n)$ for some  $a \in U(Z_n)$ if and only if  $(a \ mod \ p_i)^2  \equiv  p_i - 1 \ (mod \ p_i)$ \  for each odd  prime factor $p_i$ of $n$. Since $U(Z_{p_i}) = Z^*_{p_i} = \{1, ..., p_i - 1\}$ for each prime factor $p_i$ of $n$,  we have  $|U(Z_{p_i})| = p_i - 1$.  For each  $x \in  U(Z_{p_i})$, $1 \leq i \leq r$, let $|x|$ denotes the order of $x$  in $U(Z_{p_i})$.  Let $p_i$, $1 \leq i \leq r$,  be an odd prime factor of $n$. Since $|p_i - 1| = 2$ in $U(Z_{p_i})$, $b^2  = p_i - 1$ in $U(Z_{p_i})$ for some $b \in U(Z_{p_i})$  if and only if $|b| = 4$ in $U(Z_{p_i})$. Since $|U(Z_{p_i})| = p_i - 1$, we conclude that $b^2 = p_i - 1$ in $U(Z_{P_i})$ for some $b \in U(Z_{p_i})$ if and only if $4 \mid (p_i - 1)$. Thus $x^2 \equiv n - 1$ \ (mod \ n) has a solution in $U(Z_n)$ if and only if  $4 \mid (p_i - 1)$ for each  odd prime $p_i$ factor of $n$.  Suppose that $x^2 \equiv n -1 \ (mod \ n)$ has a solution in $U(Z_n)$. We consider two cases: {\bf Case 1}.  Suppose that  $n$ is an even integer. Then there are  exactly $r - 1$ distinct odd prime factors of $n$.  Since $4 \nmid n$, $x^2 \equiv n -1 \ (mod \ n)$ has exactly $2^{r-1}$ distinct solutions in $U(Z_n)$ by  \cite[Theorem 5.2]{WL}. {\bf Case 2}. Suppose that $n$ is an odd integer. Then there are  exactly $r$ distinct odd prime factors of $n$. Thus  $x^2 \equiv n -1 \ (mod \ n)$ has exactly $2^r$ distinct solutions in $U(Z_n)$ by  \cite[Theorem 5.2]{WL}.
       \end{proof}
    \vskip0.2in
    Let  $A = Z_n$, where $n$ is not prime. Then  $TD(A\times A)$ is connected  by \cite[Theorem 2.3 ]{A1}. In the following result, we show that $UD(A \times A)$ is disconnected, and we give a complete description of the structure of  $UD(A \times A)$.
  \begin{thm}\label{tt1}  Let $n \geq 3$ be an integer, $R = Z_n \times Z_n$ and $\phi(n) = m $. Write $n = p_1^{k_1}p_2^{k_2} \cdots p_r^{k_r}$, where the $p_i$'s are distinct prime positive integers, $1\leq i \leq r$. Then
  \begin{enumerate}
\item   If  $4 \mid  n$, then $UD(R)$ is  the disjoint union  of  $m/2$  $K_{m, m}$'s.
 \item  If  $4 \nmid  n$ and  $4 \nmid (p_i - 1)$ for at least one of the $p_i$'s in the prime factorization of $n$, then $UD(R)$ is  the disjoint union  of  $m/2$  $K_{m, m}$'s.
 \item  If  $4 \nmid  n$ and  $4 \mid (p_i - 1)$ for all the odd $p_i$'s in the prime factorization of $n$, then we consider two cases:

{\bf Case I}. If $n$ is even, then  $UD(R)$ is the disjoint union  of  $(m/2) - 2^{r-2}$    $K_{m, m}$'s and
  $2^{r-1}$    $K_m$'s.

 {\bf Case II}. If n is odd, then $UD(R)$ is the disjoint union  of  $(m/2) - 2^{r-1}$    $K_{m, m}$'s and
  $2^r$   $K_m$'s.
  \end{enumerate}

    \end{thm}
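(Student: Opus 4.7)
The plan is to mimic the strategy from Theorem \ref{t2} but with $U(Z_n)$ playing the role of $U(A) = A^*$, and then to plug in Lemma \ref{l1} at the end to handle the counting. First I would observe that every element of $U(R) = U(Z_n) \times U(Z_n)$ can be written uniquely as $u(1,a)$ with $u, a \in U(Z_n)$, because $U(Z_n)$ is a group under multiplication modulo $n$. Therefore, for each $a \in U(Z_n)$ I set $X_a = \{u(1,a) \mid u \in U(Z_n)\}$, a set of size $m = \phi(n)$, and the family $\{X_a\}_{a \in U(Z_n)}$ partitions $U(R)$ into exactly $m$ blocks.

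Next I would work out the adjacency relation: $u(1,a)$ and $v(1,b)$ are adjacent in $UD(R)$ iff $uv(1+ab) \equiv 0 \pmod n$; since $uv \in U(Z_n)$, this collapses to the condition $ab \equiv -1 \pmod n$, i.e., $b \equiv -a^{-1} \pmod n$. Consequently, $X_a$ interacts only with $X_{-a^{-1}}$. Two subcases arise: if $a \neq -a^{-1}$ (equivalently $a^2 \not\equiv -1 \equiv n-1 \pmod n$), then $X_a$ and $X_{-a^{-1}}$ are disjoint, each has no internal edges, and every vertex of $X_a$ is joined to every vertex of $X_{-a^{-1}}$, producing a $K_{m,m}$; if on the other hand $a = -a^{-1}$ (equivalently $a^2 \equiv n-1 \pmod n$), then any two distinct elements of $X_a$ have dot product $uv(1 + a^2) \equiv 0$, so $X_a$ by itself becomes a $K_m$.

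To finish I would just count how many $a \in U(Z_n)$ satisfy $a^2 \equiv n-1 \pmod n$ and apply Lemma \ref{l1}: in parts (1) and (2) there are no such solutions, so all $m$ blocks pair up into $m/2$ copies of $K_{m,m}$; in part (3) with $n$ even there are $2^{r-1}$ self-paired blocks giving $2^{r-1}$ copies of $K_m$, leaving $m - 2^{r-1}$ blocks that match into $(m - 2^{r-1})/2 = m/2 - 2^{r-2}$ copies of $K_{m,m}$; in part (3) with $n$ odd there are $2^r$ self-paired blocks giving $2^r$ copies of $K_m$, leaving $m/2 - 2^{r-1}$ copies of $K_{m,m}$.

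The main obstacle is essentially bookkeeping: making sure that the pairing $a \leftrightarrow -a^{-1}$ is a well-defined involution on $U(Z_n)$ whose fixed points are exactly the solutions of $a^2 \equiv n-1 \pmod n$, and that the $2^{r-1}$ (resp.\ $2^r$) counts from Lemma \ref{l1}\,(2) are even, so the remaining blocks really do pair up cleanly. Everything else is a straightforward adaptation of the field case, with $U(Z_n)$ replacing $A^*$ throughout.
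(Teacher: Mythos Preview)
Your proposal is correct and follows essentially the same approach as the paper: both partition $U(R)$ into the blocks $X_a=\{u(1,a)\mid u\in U(Z_n)\}$, reduce adjacency to the condition $b\equiv -a^{-1}\pmod n$, and then invoke Lemma~\ref{l1} to count the fixed points of the involution $a\mapsto -a^{-1}$. Your phrasing in terms of an involution with fixed points is a bit tidier than the paper's parallel $X_a,Y_a$ bookkeeping, and your justification via the group structure of $U(Z_n)$ is more accurate than the paper's appeal to ``$R$ is a vector space over $A$,'' but the argument is the same.
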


    \begin{proof}
      Let $A = Z_n$. Then $R = A\times A$. Note that $UD(R)$ has exactly $m^2$  vertices.  Let $v_1, v_2  \in U(R)$.  Since $R$ is a vector space over $A$, $v_1 = u(1, a) \in R$ and $v_2 = v(1, b) \in R$ for some  $u, v, a, b \in U(A)$. Hence $v_1$ is adjacent to $v_2$ if and only if $v_1 \cdot v_2 = uv + uvab = 0$ in $A$  if and only if $b = -a^{-1}$ in $A$.  Thus for each $a \in U(A)$, let $X_a = \{u(1, a) \mid u \in U(A)\}$ and $Y_a = \{u(1, - a^{-1}) \mid u \in U(A)\}$. It is clear that $|X_a| = |Y_a| = m $.

{\bf (1)}. Since $4 \mid  n$,  $a^2 \not \equiv  n-1$ \ (mod \ n) for each $a \in U(Z_n)$ by Lemma \ref{l1}(1). Hence $X_a \cap Y_a = \emptyset$; so every two distinct vertices in $X_a$ are not adjacent, and every two distinct vertices in $Y_a$ are not adjacent.  By construction of $X_a$ and $Y_a$, it is clear that every vertex in $X_a$ is adjacent to every  vertex in $Y_a$. Thus the vertices in $X_a \cup Y_a$ form the graph $K_{m, m}$ that is a complete bi-partite subgraph of $TD(R)$. By construction, there are exactly $m/ 2$  disjoint complete bi-partite $K_{m, m}$ subgraphs of $TD(R)$.   Hence $UD(R)$ is the disjoint union  of  $m/2$  $K_{m, m}$'s.

 {\bf (2)}. Write $n = p_1^{k_1}p_2^{k_2} \cdots p_r^{k_r}$, where the $p_i$'s are distinct prime positive integers. Since $4 \nmid  n$ and $4 \nmid (p_i - 1)$ for at least one of the $p_i$'s, $a^2 \not \equiv  n-1$ \ (mod \ n) for each $a \in U(Z_n)$ by Lemma \ref{l1}. Thus by the same argument as in (1), $UD(R)$ is the disjoint union  of  $m/2$  $K_{m, m}$'s.

{\bf (3)}.  Write $n = p_1^{k_1}p_2^{k_2} \cdots p_r^{k_r}$, where the $p_i$'s are distinct prime positive integers. Suppose that $4 \nmid  n$ and  $4 \mid p_i-1$ for all the odd $p_i$'s in the prime factorization of $n$. Let $B = \{b \in U(Z_n) \mid b^2 = n -1$ in $U(Z_n)\}$ and $
C = \{c \in U(Z_n) \mid c^2 \not = n -1$ in $U(Z_n)\}$.  We consider two cases: {\bf Case I}.  Suppose that $n$ is even. Then $|B| = 2^{r - 1}$ by Lemma \ref{l1}(2) and hence $|C| = m - 2^{r - 1}$. For each $a \in B$, we have $X_a = Y_a$ and hence the dot product of every two distinct vertices in $X_a$ is zero. Thus the vertices in $X_a$ form the graph $K_m$  that is  a complete subgraph of  $TD(R)$. Hence  $UD(Z_n)$ has exactly  $2^{r-1}$  disjoint  $K_m$'s. For each $a \in C$, we have  $X_a \cap Y_a = \emptyset$; so every two distinct vertices in $X_a$ are not adjacent, and every two distinct vertices in $Y_a$ are not adjacent. By construction, it is clear that every vertex in $X_a$ is adjacent to every  vertex in $Y_a$. Thus the vertices in $X_a \cup Y_a$ form the graph $K_{m, m}$ that is a complete bi-partite subgraph of $TD(R)$. Thus $UD(Z_n)$ has exactly $\frac{m - 2^{r-1}}{2} = \frac{m}{2} - 2^{r - 2}$ disjoint $K_{m, m}$'s.  {\bf Case II}. Suppose that $n$ is odd. Then $|B| = 2^r $ by Lemma \ref{l1}(2) and hence $|C| = m - 2^r$. For each $a \in B$, we have $X_a = Y_a$ and hence the dot product of every two distinct vertices in $X_a$ is zero. Thus the vertices in $X_a$ form the graph $K_m$  that is  a complete subgraph of  $TD(R)$. Hence  $UD(Z_n)$ has exactly  $2^r$  disjoint  $K_m$'s. For each $a \in C$, we have  $X_a \cap Y_a = \emptyset$, every two distinct vertices in $X_a$ are not adjacent, and every two distinct vertices in $Y_a$ are not adjacent. By construction, it is clear that every vertex in $X_a$ is adjacent to every  vertex in $Y_a$. Thus the vertices in $X_a \cup Y_a$ form the graph $K_{m, m}$ that is a complete bi-partite subgraph of $TD(R)$. Thus $UD(Z_n)$ has exactly $\frac{m - 2^r}{2} = \frac{m}{2} - 2^{r - 1}$ disjoint $K_{m, m}$'s.
  \end{proof}
\vskip0.2in
Recall that a graph $G$  is called {\it completely disconnected} if every two vertices of $G$ are not connected by an edge in $G$.

\begin{thm}
 Let $n \ge 4$ be an even integer, and let $R = Z_n \times Z_n \times \cdots \times Z_n $  ($k$  times),  where $k$ is an odd positive integer.
Then $UD(R)$ is  completely disconnected.
\end{thm}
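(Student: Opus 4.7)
The plan is to use a simple parity argument based on the fact that every unit of $Z_n$ is odd when $n$ is even. Since $U(R) = U(Z_n)^k$, any two vertices of $UD(R)$ have the form $v_1 = (a_1, \ldots, a_k)$ and $v_2 = (b_1, \ldots, b_k)$ with each $a_i, b_i \in U(Z_n)$. I would show directly that $v_1 \cdot v_2 \neq 0$ in $Z_n$, regardless of whether $v_1 = v_2$ or not, which forces $UD(R)$ to have no edges at all.

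The first step is to note that if $u \in U(Z_n)$ then $\gcd(u, n) = 1$; since $2 \mid n$, this forces $u$ to be odd when represented by the integer in $\{1, 2, \ldots, n-1\}$. Consequently, for each $1 \le i \le k$, the product $a_i b_i$ is an odd integer (regarded modulo $n$, we can always choose the odd representative coprime to $n$ for the product, but the parity statement holds at the integer level). The second step is to compute
\[
v_1 \cdot v_2 \;=\; \sum_{i=1}^k a_i b_i,
\]
a sum of $k$ odd integers. Because $k$ is odd, this sum is itself odd.

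The third and final step is to observe that $0 \in Z_n$ is represented by an even integer (namely $0$, or equivalently any multiple of the even number $n$), while the dot product is odd. Hence $v_1 \cdot v_2 \not\equiv 0 \pmod{n}$. This shows that no two vertices of $UD(R)$ are joined by an edge, i.e.\ $UD(R)$ is completely disconnected.

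There is no serious obstacle here; the entire argument is a parity check, and the hypotheses ``$n$ even'' and ``$k$ odd'' are used exactly once each, precisely to make the odd-sum conclusion incompatible with divisibility by $n$. The only mild care to take is to make clear that the parity argument is carried out at the level of integer representatives, so that ``odd'' is unambiguous — which is fine because units of $Z_n$ have a canonical representative coprime to $n$, and when $n$ is even such a representative is odd.
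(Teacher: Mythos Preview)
Your proof is correct and essentially identical to the paper's own argument: both observe that units of $Z_n$ are odd when $n$ is even, so each product $a_ib_i$ is odd, and a sum of an odd number $k$ of odd integers is odd and hence nonzero modulo the even $n$. The paper's proof is just a more compressed version of what you wrote.
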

\begin{proof}
Let $x = (x_1, ..., x_k), y = (y_1, ..., y_k) \in U(R)$. Then  $x_i, y_i \in U(Z_n)$ for every i, $1 \leq i \leq k$.  Since $n$ is an even integer, $x_i$ and $y_i$ are odd integers for every $i$, $1 \leq i \leq k$.  Hence, since $k$ is an odd integer, $x \cdot y = x_1y_1 + \cdots + x_ky_k$ is an odd integer, and thus $x \cdot y = x_1y_1 + \cdots + x_ky_k \not = 0$ in $Z_n$, since $n$ is even. Thus  $UD(R)$ is  completely disconnected.
\end{proof}

\begin{thm}
 Let $n \geq 4$ be an even integer, and let $R = Z_n \times Z_n$. Then  the vertex $(n/2,n/2)$ in $ZD(R)$ is adjacent to every vertex in $UD(R)$.
\end{thm}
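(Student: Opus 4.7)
The plan is short because the statement is essentially a parity observation. First I would verify that $(n/2,n/2)$ really is a vertex of $ZD(R)$: it is nonzero since $n\geq 4$ forces $n/2\geq 2$, and it is a zero-divisor because $2\cdot(n/2,n/2)=(0,0)$ in $R$, so $(n/2,n/2)\in Z(R)^*$.

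Next I would pick an arbitrary vertex $w=(u,v)\in U(R)$, so that $u,v\in U(Z_n)$. The key observation is that since $n$ is even, every element of $U(Z_n)$ is coprime to $n$, hence in particular coprime to $2$, hence odd when regarded as an integer representative. Therefore $u+v$ is even, so we may write $u+v=2t$ for some integer $t$.

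Now I would compute the dot product directly:
\[
(n/2,n/2)\cdot(u,v)=(n/2)u+(n/2)v=(n/2)(u+v)=(n/2)(2t)=nt\equiv 0\pmod n.
\]
Thus $(n/2,n/2)$ is adjacent to $(u,v)$ in $TD(R)$, and since this holds for every $(u,v)\in U(R)$, the vertex $(n/2,n/2)$ is adjacent to every vertex of $UD(R)$, completing the argument.

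There is no real obstacle here; the only subtlety worth pointing out is making explicit the step that units in $Z_n$ must have odd integer representatives when $n$ is even, since that is the entire content of the proof. No appeal to the earlier lemmas is needed, only the definition of the dot product and of the graphs $ZD(R)$ and $UD(R)$.
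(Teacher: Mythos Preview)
Your proof is correct and follows essentially the same parity argument as the paper: both use that units of $Z_n$ are odd when $n$ is even, and then compute the dot product directly. The only cosmetic difference is that the paper first isolates the identity $\tfrac{n}{2}u=\tfrac{n}{2}$ for each unit $u$ before summing, whereas you combine the two coordinates in one step via $u+v=2t$; the content is the same.
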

\begin{proof}
It is clear that  $(\frac{n}{2}, \frac{n}{2})$  is a vertex of $ZD(R)$. Let $u \in U(Z_n)$. Since $n$ is even, $u$ is an odd integer. Thus $u - 1 = 2m$ for some  integer $m$.  Hence  $\frac{n}{2}(u - 1) = \frac{n}{2}(2m) = mn = 0 \in Z_n$. Thus $\frac{n}{2}u = \frac{n}{2}$ in $Z_n$. Now let $(a, b) \in U(R)$. Then $a, b \in U(Z_n)$ are odd integers. Hence $(a, b)(\frac{n}{2}, \frac{n}{2}) = \frac{n}{2} + \frac{n}{2} = n = 0 \in Z_n$. Thus the  vertex $(n/2,n/2)$ in $ZD(R)$ is adjacent to every vertex in $UD(R)$.
\end{proof}

\begin{exa}
Let $A = Z_8$ and  $R = A\times A$. Then $UD(R)$ is the disjoint union  of  two  $K_{4, 4}$ by Theorem  \ref{tt1}(1). The following is the graph of  $UD(R)$.
\vskip0.2in
\begin{tikzpicture}[y=.2cm, x=.2cm,font=\sffamily]
\draw (0,0) -- (0,10);
\draw (0,0) -- (8,10);
\draw (0,0) -- (16, 10);
\draw (0,0)-- (24,10);

\draw (8,0) -- (0,10);
\draw (8,0) -- (8,10);
\draw (8,0) -- (16, 10);
\draw (8,0)-- (24,10);

\draw (16,0) -- (0,10);
\draw (16,0) -- (8,10);
\draw (16,0) -- (16, 10);
\draw (16,0)-- (24,10);

\draw (24,0) -- (0,10);
\draw (24,0) -- (8,10);
\draw (24,0) -- (16, 10);
\draw (24,0)-- (24,10);

\draw (32,0) -- (32,10);
\draw (32,0) -- (40,10);
\draw (32,0) -- (48, 10);
\draw (32,0)-- (56,10);

\draw (40,0) -- (32,10);
\draw (40,0) -- (40,10);
\draw (40,0) -- (48, 10);
\draw (40,0)-- (56,10);

\draw (48,0) -- (32,10);
\draw (48,0) -- (40,10);
\draw (48,0) -- (48, 10);
\draw (48,0)-- (56,10);

\draw (56,0) -- (32,10);
\draw (56,0) -- (40,10);
\draw (56,0) -- (48, 10);
\draw (56,0)-- (56,10);

\filldraw[fill=blue!40,draw=blue!80] (0, 10) circle (3pt)    node[anchor=south] {$(1, 1)$};
\filldraw[fill=blue!40,draw=blue!80] (8,10) circle (3pt)    node[anchor=south] {$(3, 3)$};
\filldraw[fill=blue!40,draw=blue!80] (16, 10) circle (3pt)    node[anchor=south] {$(5, 5)$};
\filldraw[fill=blue!40,draw=blue!80] (24, 10) circle (3pt)    node[anchor=south] {$(7, 7)$};
\filldraw[fill=blue!40,draw=blue!80] (32,10) circle (3pt)    node[anchor=south] {$(1, 3)$};
\filldraw[fill=blue!40,draw=blue!80] (40, 10) circle (3pt)    node[anchor=south] {$(3, 1)$};
\filldraw[fill=blue!40,draw=blue!80] (48, 10) circle (3pt)    node[anchor=south] {$(5, 7)$};
\filldraw[fill=blue!40,draw=blue!80] (56, 10) circle (3pt)    node[anchor=south] {$(7,  5)$};

\filldraw[fill=blue!40,draw=blue!80] (0,0) circle (3pt)    node[anchor=north] {$(1, 7)$};
\filldraw[fill=blue!40,draw=blue!80] (8,0) circle (3pt)    node[anchor=north] {$(3, 5)$};
\filldraw[fill=blue!40,draw=blue!80] (16, 0) circle (3pt)    node[anchor=north] {$(5, 3)$};
\filldraw[fill=blue!40,draw=blue!80] (24,0) circle (3pt)    node[anchor=north] {$(7, 1)$};
\filldraw[fill=blue!40,draw=blue!80] (32,0) circle (3pt)    node[anchor=north] {$(1, 5)$};
\filldraw[fill=blue!40,draw=blue!80] (40, 0) circle (3pt)    node[anchor=north] {$(3, 7)$};
\filldraw[fill=blue!40,draw=blue!80] (48,0) circle (3pt)    node[anchor=north] {$(5, 1)$};
\filldraw[fill=blue!40,draw=blue!80] (56, 0) circle (3pt)    node[anchor=north] {$(7, 3)$};
\end{tikzpicture}
\end{exa}

\begin{exa}
Let $A = Z_{10}$ and  $R = A\times A$. Then  $UD(R)$ is the disjoint union  of  two  $K_4$ and one  $K_{4, 4}$ by Theorem  \ref{tt1}(3, case I). The following is the graph of  $UD(R)$.
\vskip0.2in
\begin{tikzpicture}[y=.2cm, x=.2cm,font=\sffamily]
\draw (0,0) -- (0,10);
\draw (0,0) -- (6,10);
\draw (0,0) -- (6,0);
\draw (0,10) -- (6,10);
\draw (0,10) -- (6,0);
\draw (6, 0) -- (6,10);

\draw (12,0) -- (12,10);
\draw (12,0) -- (18,10);
\draw (12,0) -- (18,0);
\draw (12,10) -- (18,10);
\draw (12,10) -- (18,0);
\draw (18, 0) -- (18,10);

\draw (24,10) -- (24,0);
\draw (24,10) -- (32,0);
\draw (24,10) -- (40,0);
\draw (24,10) -- (48,0);

\draw (32,10) -- (24,0);
\draw (32,10) -- (32,0);
\draw (32,10) -- (40,0);
\draw (32,10) -- (48,0);

\draw (40,10) -- (24,0);
\draw (40,10) -- (32,0);
\draw (40,10) -- (40,0);
\draw (40,10) -- (48,0);

\draw (48,10) -- (24,0);
\draw (48,10) -- (32,0);
\draw (48,10) -- (40,0);
\draw (48,10) -- (48,0);
\filldraw[fill=blue!40,draw=blue!80] (0,0) circle (3pt)    node[anchor=north] {$(1, 3)$};
\filldraw[fill=blue!40,draw=blue!80] (0,10) circle (3pt)    node[anchor=south] {$(3, 9)$};
\filldraw[fill=blue!40,draw=blue!80] (6,10) circle (3pt)    node[anchor=south] {$(7, 1)$};
\filldraw[fill=blue!40,draw=blue!80] (6,0) circle (3pt)    node[anchor=north] {$(9, 7)$};

\filldraw[fill=blue!40,draw=blue!80] (12,0) circle (3pt)    node[anchor=north] {$(1, 7)$};
\filldraw[fill=blue!40,draw=blue!80] (12,10) circle (3pt)    node[anchor=south] {$(3, 1)$};
\filldraw[fill=blue!40,draw=blue!80] (18,10) circle (3pt)    node[anchor=south] {$(7, 9)$};
\filldraw[fill=blue!40,draw=blue!80] (18,0) circle (3pt)    node[anchor=north] {$(9, 3)$};

\filldraw[fill=blue!40,draw=blue!80] (24,10) circle (3pt)    node[anchor=south] {$(1, 1)$};
\filldraw[fill=blue!40,draw=blue!80] (32,10) circle (3pt)    node[anchor=south] {$(3, 3)$};
\filldraw[fill=blue!40,draw=blue!80] (40,10) circle (3pt)    node[anchor=south] {$(7, 7)$};
\filldraw[fill=blue!40,draw=blue!80] (48,10) circle (3pt)    node[anchor=south] {$(9, 9)$};

\filldraw[fill=blue!40,draw=blue!80] (24,0) circle (3pt)    node[anchor=north] {$(1, 9)$};
\filldraw[fill=blue!40,draw=blue!80] (32,0) circle (3pt)    node[anchor=north] {$(3, 7)$};
\filldraw[fill=blue!40,draw=blue!80] (40,0) circle (3pt)    node[anchor=north] {$(7, 3)$};
\filldraw[fill=blue!40,draw=blue!80] (48,0) circle (3pt)    node[anchor=north] {$(9, 1)$};
\end{tikzpicture}
\end{exa}

\section{Subgraphs of the zero-divisor dot product graph of $Z_n \times Z_n$}\label{s4}

 For an integer  $n \geq 2$, let  $R_1=\{(u_1,z_1)\mid u_1 \in U(Z_n)$\ and  $z_1 \in Z(Z_n)\}$ \ and  $R_2 = \{(z_2 ,u_2)\mid u_2 \in U(Z_n)$ \ and  $z_2 \in Z(Z_n)\}$. It is clear that  $R_1 \subset Z(Z_n \times Z_n)$ and $R_2 \subset Z(Z_n \times Z_n)$. In this section, we study the induced subgraph $ZD(R_1\cup R_2)$ of $ZD(Z_n \times Z_n)$  with  vertices  $R_1 \cup R_2$.

 \begin{thm}\label{ttt1}  Let  $n \geq 2$, $R = Z_n \times Z_n$, and $\phi(n)= m $. Then
 \begin{enumerate}
 \item If $n$ is prime, then $ZD(R_1\cup R_2)$ =  $ZD(Z_n \times Z_n)$ = $\Gamma(R) = K_{n - 1, n - 1}$.
  \item If $n$ is not prime, then $ZD(R_1\cup R_2)$ is  the disjoint union  of of $(n - m)$    $K_{m, m}$'s.
  \end{enumerate}
\end{thm}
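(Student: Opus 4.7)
The plan is to adapt the slope-parameterization used in Theorems~\ref{t1}, \ref{t2}, and~\ref{tt1} to the present bipartite setting. For each $a \in Z(Z_n)$, set $X_a = \{u(1,a) : u \in U(Z_n)\} \subseteq R_1$ and $Y_a = \{u(a,1) : u \in U(Z_n)\} \subseteq R_2$. A direct verification shows $|X_a| = |Y_a| = m$ (the first, resp.\ second, coordinate recovers $u$, which then determines $a$), and that the $X_a$'s (resp.\ $Y_a$'s) partition $R_1$ (resp.\ $R_2$) as $a$ runs over $Z(Z_n)$, giving $n-m$ classes of size $m$ on each side.

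For part (1), $n$ prime forces $Z(Z_n) = \{0\}$, so $R_1 \cup R_2$ consists exactly of the vertices $(u,0)$ and $(0,v)$ with $u,v \in Z_n^*$, which together exhaust $Z(R)^*$. Hence $ZD(R_1 \cup R_2) = ZD(R) = \Gamma(R)$, and the same references invoked in Theorems~\ref{t1}(1) and~\ref{t2}(1) yield $K_{n-1,n-1}$.

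For part (2), the key computation is that for $x = u(1,a) \in X_a$ and $y = v(b,1) \in Y_b$, one has $x \cdot y = uv(a+b)$; since $uv \in U(Z_n)$, adjacency in $ZD(R)$ is equivalent to $a + b \equiv 0 \pmod{n}$. On the other hand, two elements of $R_1$ have dot product of the form $u_1 u_1' + z_1 z_1'$, a unit plus a zero-divisor in $Z_n$, which is never zero because units and zero-divisors of $Z_n$ are disjoint; the analogous statement for $R_2$ is identical. Therefore $ZD(R_1 \cup R_2)$ has no edges inside $R_1$ or inside $R_2$, each $X_a$ is completely joined to $Y_{-a}$, and no other cross-adjacencies occur. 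This realizes $X_a \cup Y_{-a}$ as a $K_{m,m}$ for each of the $n-m$ values $a \in Z(Z_n)$, and distinct values of $a$ produce vertex-disjoint components.

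The only subtlety to watch is the ``diagonal'' case $a = -a$ (namely $a = 0$ always, and $a = n/2$ when $n$ is even), where $Y_{-a} = Y_a$. This requires no special treatment: $X_a \subseteq R_1$ and $Y_a \subseteq R_2$ remain disjoint as subsets of $R$ (a pair is not simultaneously unit--by--zero-divisor and zero-divisor--by--unit), so $X_a \cup Y_a$ is an honest $K_{m,m}$ rather than a collapsed graph. Once this is observed, what remains is a routine count: the $n-m$ components exhaust $R_1 \cup R_2$ and no edges lie between distinct components.
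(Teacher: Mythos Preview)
Your proof is correct and follows essentially the same approach as the paper's: parameterize $R_1$ and $R_2$ by the sets $X_a$ and $Y_a$, compute the dot product to reduce cross-adjacency to $b=-a$, and count. The only cosmetic difference is that the paper indexes with $Y_a=\{u(-a,1):u\in U(Z_n)\}$ so that $X_a$ pairs with $Y_a$ directly, whereas you set $Y_a=\{u(a,1)\}$ and pair $X_a$ with $Y_{-a}$; your explicit justification for the absence of edges inside each $R_i$ (unit plus zero-divisor is never zero) and your remark on the $a=-a$ case are welcome elaborations of points the paper leaves implicit.
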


 \begin{proof}
{\bf (1)}. Suppose that  $n$ is prime. Then it is clear that $R_1 \cup R_2 = Z(Z_n \times Z_n)$. If $n = 2$, then it is trivial to see that $ZD(R_1\cup R_2)$ =  $ZD(Z_n \times Z_n)$ = $\Gamma(R) = K_{1, 1}$. If $n \geq 3$, then  the claim is clear by Corollary \ref{c1}(1).

{\bf (2)}.  Let $A = Z_n$. Suppose that $n$ is not prime.  It is clear that every two vertices in $R_i$ are not adjacent for every $i \in \{1, 2\}$.  Let $v_1 \in R_1$ and $v_2 \in R_2$. Then $v_1 = u(1, a) \in R_1$ and $v_2 = v(b, 1) \in R_2$ for some  $u, v \in U(A)$ and some $a, b \in Z(A)$. Then  $v_1$ is adjacent to $v_2$ if and only if $v_1 \cdot v_2 = uvb + uva = 0$ in $A$  if and only if $b = -a$ in $A$.  Hence for each $a \in Z(A)$, let $X_a = \{u(1, a) \mid u \in U(A)\}$ and $Y_a = \{u(-a, 1) \mid u \in U(A)\}$. It is clear that $|X_a| = |Y_a| = m $. For each $a \in Z(A)$, $X_a \cap Y_a = \emptyset$, every two distinct vertices in $X_a$ are not adjacent, and every two distinct vertices in $Y_a$ are not adjacent. By construction, it is clear that every vertex in $X_a$ is adjacent to every  vertex in $Y_a$. Thus the vertices in $X_a \cup Y_a$ form the graph $K_{m, m}$ that is a complete bi-partite subgraph of $ZD(R)$. Since $|R_1| = |R_2| =  m(n - m)$ and  $R_1 \cap R_2 = \emptyset$, we have $|R_1 \cup R_2| = 2m(n - m)$. Thus  $ZD(R_1\cup R_2)$ is  the disjoint union  of of $(n - m)$    $K_{m, m}$'s.
\end{proof}

\section{equivalence dot product graph}
Let $A = Z_n$ and $R = A \times A$. Define a relation $\thicksim$ on $U(R)$ such that  $x \thicksim y$, where $x, y \in U(R)$, if $x  = (c, c)y$ for some  $(c, c) \in U(R)$. It is clear that $\thicksim$ is an equivalence relation on $U(R)$. If $S$ is  an equivalence class of $U(R)$, then there is an $a \in U(A)$ such that $S  = \overline{(1, a)}= \{u(1, a) \mid u \in U(Z_n) \}$. Let $E(U(R))$ be the set of all distinct equivalence classes of  $U(R)$.  We  define the {\it equivalence unit  dot product graph} of $U(R)$ to be  the (undirected) graph $EUD(R)$ with vertices $E(U(R))$, and two distinct vertices $X$ and $Y$ are adjacent if and only if $a\cdot b = 0 \in A$ for every $a \in X$  and every $b \in Y$ (where $a\cdot b$ denote the normal dot product of $a$ and $b$). We have the following results.

\begin{thm}\label{rt1} Let  $n \geq 1$, $m = 2^n - 1$  and  $R = GF(2^n) \times GF(2^n)$. Then $EUD(R)$ is  the disjoint union  of one  $K_1$ and $(2^{(n - 1)} - 1) $  \ $K_{1, 1}$'s.

    \end{thm}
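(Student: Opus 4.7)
The plan is to leverage Theorem \ref{t1}(2) almost directly, since with $A = GF(2^n)$ the equivalence classes under $\sim$ coincide with the sets $X_a = \{u(1,a) \mid u \in A^*\}$ already used in that proof. Indeed, every unit $(x,y) \in U(R)$ may be written as $(x,x)(1, x^{-1}y)$, so each class has a unique representative of the form $(1,a)$ with $a \in A^*$. In particular, the vertex set $E(U(R))$ is in bijection with $A^*$ and has exactly $m = 2^n - 1$ elements.

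Next I would characterize the edges of $EUD(R)$. Two distinct classes $X_a$ and $X_b$ are adjacent iff $u(1,a) \cdot v(1,b) = uv(1 + ab) = 0$ in $A$ for every $u, v \in A^*$. Since $u, v$ are units of the field $A$, this is equivalent to $ab = -1$, and because $\mathrm{char}(A) = 2$ this is just $b = a^{-1}$. Hence every vertex $X_a$ of $EUD(R)$ has at most one potential neighbor, namely $X_{a^{-1}}$, and this neighbor is genuinely distinct from $X_a$ exactly when $a \neq a^{-1}$.

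I would then split into the two cases governed by whether $a = a^{-1}$. For $a = 1$, the would-be neighbor $X_{a^{-1}} = X_1$ equals $X_a$ itself, so $X_1$ is isolated in $EUD(R)$, contributing the single $K_1$ component. For $a \in A^* \setminus \{1\}$, the equation $a^2 = 1$ in $GF(2^n)$ forces $a = 1$, so $a \neq a^{-1}$; thus $X_a$ is joined to the distinct vertex $X_{a^{-1}}$ and these two vertices together form a $K_{1,1}$ component. The involution $a \mapsto a^{-1}$ on $A^* \setminus \{1\}$ is fixed-point-free, so its $2^n - 2$ elements pair off into $(2^n - 2)/2 = 2^{n-1} - 1$ inverse pairs, producing exactly $2^{n-1} - 1$ disjoint $K_{1,1}$'s.

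I do not expect a real obstacle: the argument is essentially a quotient of Theorem \ref{t1}(2) by the equivalence relation that collapses each $X_a$ to a single vertex. The only subtle point to handle carefully is the treatment of the self-paired class $X_1$: in $UD(R)$ it produces the complete subgraph $K_m$ because its elements are pairwise orthogonal, but in $EUD(R)$, where adjacency is defined only between \emph{distinct} vertices, this same class becomes an isolated $K_1$.
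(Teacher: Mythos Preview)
Your proposal is correct and follows essentially the same approach as the paper: both identify the equivalence classes with the sets $X_a$ from the proof of Theorem~\ref{t1} and then observe that collapsing each $X_a$ to a single vertex turns the $K_m$ into a $K_1$ and each $K_{m,m}$ into a $K_{1,1}$. The paper states this reduction tersely and defers to the proof of Theorem~\ref{t1}, whereas you spell out the adjacency characterization $b=a^{-1}$ and the counting explicitly, but the underlying argument is the same.
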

    \begin{proof} Let $A = GF(2^n)$. For each  $a \in U(A)$, let $X_a$ and $Y_a$ be as in the proof of Theorem \ref{t1}. Then $X_a, Y_a \in E(U(R))$. Since $|X| = m$ for each $X \in E(U(R))$,  we conclude that each $K_m$ of  $UD(R)$ is a $K_1$ of $EUD(R)$ and  each  $K_{m, m}$ of  $UD(R)$ is a $K_{1, 1}$ of $EUD(R)$. Hence the claim follows by the proof of  Theorem \ref{t1}.
\end{proof}

 \begin{thm}\label{rt2}  Let $p \geq 3$ be a positive prime integer, $n \geq 1$, $m = p^n - 1$, and let  $R = GF(p^n) \times GF(p^n)$. Then
\begin{enumerate}
\item If $4 \nmid m$, then $EUD(R)$ is  the disjoint union  of  $m/2$  $K_{1, 1}$'s.
    \item If $4 \mid m$, then $EUD(R)$ is  the disjoint union  of two  $K_1$'s  and $(m - 2)/2$  \ $K_{1, 1}$'s.

            \end{enumerate}
    \end{thm}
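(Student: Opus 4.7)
The plan is to mirror the proof of Theorem \ref{rt1}, applying the structural decomposition of $UD(R)$ established in Theorem \ref{t2} to the equivalence-class graph $EUD(R)$. First I would recall from the proof of Theorem \ref{t2} that for each $a \in U(A) = A^*$, the set $X_a = \{u(1,a) \mid u \in U(A)\}$ has cardinality $m = p^n - 1$, and every element of $U(R)$ lies in exactly one such $X_a$. I would then verify that each $X_a$ is precisely a $\sim$-equivalence class: any two elements $u(1,a)$ and $v(1,a)$ satisfy $u(1,a) = (uv^{-1}, uv^{-1}) \cdot v(1,a)$ with $uv^{-1} \in U(A)$, while conversely multiplying $v(1,a)$ by $(c,c) \in U(R)$ yields $vc(1,a) \in X_a$. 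Thus $E(U(R))$ consists exactly of the distinct sets $X_a$ for $a \in U(A)$, and $Y_a = X_{-a^{-1}}$ is another such class.

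Second, I would translate the adjacency of $UD(R)$ into the adjacency of $EUD(R)$. From Theorem \ref{t2}, the vertices $u(1,a)$ and $v(1,b)$ are adjacent in $UD(R)$ if and only if $b = -a^{-1}$, a condition depending only on the classes $X_a$ and $X_b$, not on the representatives $u$ and $v$. Consequently every element of $X_a$ is adjacent to every element of $Y_a = X_{-a^{-1}}$, while if $X_b \neq Y_a$ then no element of $X_a$ is adjacent to any element of $X_b$. Therefore the pair $(X_a, X_b)$ is adjacent in $EUD(R)$ precisely when $X_b = Y_a$.

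Finally, I would count, splitting by the two cases of the statement. If $4 \nmid m$, then $a^2 \neq -1$ for every $a \in U(A)$, so $X_a \neq Y_a$ for all $a$, and the $m$ classes pair up into $m/2$ disjoint $K_{1,1}$'s in $EUD(R)$, giving part (1). If $4 \mid m$, then $U(A)$ contains exactly two elements $b, c$ with $b^2 = c^2 = -1$, so $X_b = Y_b$ and $X_c = Y_c$; by the adjacency rule, the only candidate class adjacent to $X_b$ is $Y_b = X_b$ itself, hence $X_b$ is an isolated vertex ($K_1$) in $EUD(R)$, and similarly for $X_c$, while the remaining $m - 2$ classes partition into $(m-2)/2$ adjacent pairs giving $(m-2)/2$ copies of $K_{1,1}$, yielding part (2). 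The only real point to nail down is the identification of the $\sim$-classes with the sets $X_a$; once that is in hand, the rest is simply a transfer of the disjoint-union decomposition from Theorem \ref{t2}, with each $K_m$ block collapsing to a $K_1$ and each $K_{m,m}$ block collapsing to a $K_{1,1}$.
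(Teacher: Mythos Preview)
Your proposal is correct and follows essentially the same approach as the paper: you identify the sets $X_a$ from the proof of Theorem~\ref{t2} with the $\sim$-equivalence classes and then transfer the decomposition of $UD(R)$ to $EUD(R)$, collapsing each $K_m$ to a $K_1$ and each $K_{m,m}$ to a $K_{1,1}$. The paper's proof is terser, but your more explicit verification that the $X_a$'s are precisely the equivalence classes and that adjacency depends only on the class is exactly the content underlying the paper's one-line assertion ``$X_a, Y_a \in E(U(R))$.''
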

\begin{proof}
Let $A = GF(p^n)$. For each  $a \in U(A)$, let $X_a$ and $Y_a$ be as in the proof of Theorem \ref{t2}. Then $X_a, Y_a \in E(U(R))$. Since $|X| = m$ for each $X \in E(U(R))$,  we conclude that each $K_m$ of  $UD(R)$ is a $K_1$ of $EUD(R)$ and  each  $K_{m, m}$ of  $UD(R)$ is a $K_{1, 1}$ of $EUD(R)$. Hence the claim follows by the proof of  Theorem \ref{t2}.
\end{proof}

\begin{thm}\label{rtt1}  Let $n \geq 3$ be an integer, $R = Z_n \times Z_n$ and $\phi(n)=m $. Write $n = p_1^{k_1}p_2^{k_2} \cdots p_r^{k_r}$, where the $p_i$'s are distinct prime positive integers, $1\leq i \leq r$. Then
  \begin{enumerate}
\item   If  $4 \mid  n$, then $EUD(R)$ is  the disjoint union  of  $m/2$  $K_{1, 1}$'s.
 \item  If  $4 \nmid  n$ and  $4 \nmid (p_i - 1)$ for at least one of the $p_i$'s in the prime factorization of $n$, then $EUD(R)$ is  the disjoint union  of  $m/2$  $K_{1, 1}$'s.
 \item  If  $4 \nmid  n$ and  $4 \mid (p_i - 1)$ for all the odd $p_i$'s in the prime factorization of $n$, then we consider two cases:

{\bf Case I}. If $n$ is even, then  $EUD(R)$ is the disjoint union  of  $(m/2) - 2^{r-2}$    $K_{1, 1}$'s and
  $2^{r-1}$    $K_1$'s.

 {\bf Case II}. If n is odd, then $EUD(R)$ is the disjoint union  of  $(m/2) - 2^{r-1}$    $K_{1, 1}$'s and
  $2^r$    $K_1$'s.
  \end{enumerate}
  \end{thm}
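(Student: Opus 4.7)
The plan is to mimic the proofs of Theorem \ref{rt1} and Theorem \ref{rt2}, reducing the problem to the structural description of $UD(R)$ already obtained in Theorem \ref{tt1}. The key observation is that for each $a \in U(Z_n)$, the set $X_a = \{u(1,a) \mid u \in U(Z_n)\}$ used in the proof of Theorem \ref{tt1} is exactly the equivalence class $\overline{(1,a)}$ under $\sim$: for $u,v \in U(Z_n)$ we have $u(1,a) = (uv^{-1},uv^{-1})\cdot v(1,a)$ with $(uv^{-1},uv^{-1}) \in U(R)$, and conversely $(c,c)\cdot v(1,a) = cv(1,a) \in X_a$ for every $c \in U(Z_n)$. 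Hence $E(U(R)) = \{X_a \mid a \in U(Z_n)\}$ and every equivalence class has cardinality $m$.

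Next I would translate the decomposition of $UD(R)$ from Theorem \ref{tt1} into a decomposition of $EUD(R)$. Each $K_m$ subgraph of $UD(R)$ arises from a single class $X_a$ with $X_a = Y_a$, equivalently $a^2 \equiv n-1 \pmod{n}$; collapsed to a single vertex in $EUD(R)$ this is a $K_1$. Each $K_{m,m}$ subgraph of $UD(R)$ arises from two distinct classes $X_a, Y_a$ such that every cross-pair has dot product zero, which in $EUD(R)$ becomes an edge $K_{1,1}$. No other adjacencies between distinct classes appear: the proof of Theorem \ref{tt1} showed that $u(1,a) \cdot v(1,b) = 0$ forces $b = -a^{-1}$, so no class $X_b$ with $b \notin \{a, -a^{-1}\}$ can satisfy the uniform zero-dot-product condition required by the definition of $EUD(R)$. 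In particular, when $X_a = Y_a$ this class cannot be adjacent (in $EUD(R)$) to any other class, so it is genuinely isolated, i.e. a $K_1$.

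With this dictionary in hand, each of the three cases of Theorem \ref{rtt1} follows by direct transcription from the corresponding case of Theorem \ref{tt1}: the number of $K_1$'s in $EUD(R)$ equals the number of $K_m$'s in $UD(R)$, and the number of $K_{1,1}$'s equals the number of $K_{m,m}$'s. The counts themselves come from Lemma \ref{l1} applied through Theorem \ref{tt1}, governing how many $a \in U(Z_n)$ satisfy $a^2 \equiv n-1 \pmod n$. I do not expect a serious obstacle; the only point requiring care is confirming that the $EUD$-adjacency condition (zero dot product for \emph{every} cross-pair) is precisely what was already verified inside the proof of Theorem \ref{tt1} for each $K_{m,m}$ block.
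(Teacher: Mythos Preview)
Your proposal is correct and follows essentially the same approach as the paper: identify the sets $X_a$ and $Y_a$ from the proof of Theorem \ref{tt1} as the equivalence classes in $E(U(R))$, observe that each has cardinality $m$, and conclude that every $K_m$ of $UD(R)$ collapses to a $K_1$ and every $K_{m,m}$ collapses to a $K_{1,1}$ in $EUD(R)$. The paper's proof is terser, but your additional care in verifying that the $EUD$-adjacency condition matches the cross-pair condition already checked in Theorem \ref{tt1}, and that no extraneous adjacencies arise, is well placed.
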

\begin{proof}
Let $A = Z_n$. For each  $a \in U(A)$, let $X_a$ and $Y_a$ be as in the proof of Theorem \ref{tt1}. Then $X_a, Y_a \in E(U(R))$. Since $|X| = m$ for each $X \in E(U(R))$,  we conclude  that each $K_m$ of  $UD(R)$ is a $K_1$ of $EUD(R)$ and  each  $K_{m, m}$ of  $UD(R)$ is a $K_{1, 1}$ of $EUD(R)$. Hence the claim follows by the proof of  Theorem \ref{tt1}.
    \end{proof}

Let $R_1=\{(u_1,z_1)\mid u_1 \in U(Z_n)$\ and  $z_1 \in Z(Z_n)\}$ \ and  $R_2 = \{(z_2 ,u_2)\mid u_2 \in U(Z_n)$ \ and  $z_2 \in Z(Z_n)\}$, see section \ref{s4}.  We define a relation $\thicksim$ on $R_1 \cup R_2$ such that  $x \thicksim y$, where $x, y \in R_1 \cup R_2$, if $x  = (c, c)y$ for some  $(c, c) \in U(Z_n \times Z_n)$. It is clear that $\thicksim$ is an equivalence relation on $R_1 \cup R_2$. By construction of $R_1$ and $R_2$, it is clear that if $x \thicksim y$ for some $x, y \in R_1 \cup R_2$, then $x, y \in R_1$ or  $x, y \in R_2$. Hence if  $S$ is  an equivalence class of $R_1 \cup R_2$, then there is an $a \in Z(Z_n)$ such that either $S  = (\overline{(1, a)} = \{u(1, a) \mid u \in U(Z_n) \}$ or  $S =  \overline{(a, 1)} = \{u(a, 1) \mid u \in U(Z_n) \}$. Let $E(R_1 \cup R_2)$ be the set of all distinct equivalence classes of  $R_1 \cup R_2$.  We  define the {\it equivalence zero-divisor dot product graph} $R_1 \cup R_2$ to be  the (undirected) graph $EZD(R_1 \cup R_2)$ with vertices $E(R_1 \cup R_2)$, and two distinct vertices $X$ and $Y$ are adjacent if and only if $a\cdot b = 0 \in A$ for every $a \in X$  and every $b \in Y$ (where $a\cdot b$ denote the normal dot product of $a$ and $b$). We have the following result.

\begin{thm}\label{rttt1}  Let $n \geq 2, R = Z_n \times Z_n$, and $\phi(n)= m $. Then
 \begin{enumerate}
 \item If $n$ is prime, then $EZD(R_1\cup R_2) = K_{1, 1}$.
  \item If $n$ is not prime, then $EZD(R_1\cup R_2)$ is the disjoint union  of of $(n - m)$    $K_{1, 1}$'s.
  \end{enumerate}
\end{thm}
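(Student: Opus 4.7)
The plan is to mirror the strategy used in Theorems \ref{rt1}, \ref{rt2}, and \ref{rtt1}: every $K_{m,m}$ appearing in $ZD(R_1 \cup R_2)$ has its two partite sets equal to single $\sim$-equivalence classes, so after quotienting each $K_{m,m}$ collapses to a $K_{1,1}$. Thus Theorem \ref{ttt1} will transfer directly to a statement about $EZD(R_1 \cup R_2)$.

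The first step is to identify the equivalence classes of $\sim$ on $R_1 \cup R_2$ with the sets $X_a$ and $Y_a$ used in the proof of Theorem \ref{ttt1}. Given $(u,z) \in R_1$ with $u \in U(Z_n)$ and $z \in Z(Z_n)$, one has $(u,z) = u(1, u^{-1}z)$, so $\overline{(u,z)} = \{c(1, u^{-1}z) \mid c \in U(Z_n)\} = X_a$ where $a = u^{-1}z \in Z(Z_n)$. Similarly every class of an element of $R_2$ has the form $Y_a = \{c(-a,1) \mid c \in U(Z_n)\}$ for some $a \in Z(Z_n)$ (after relabeling $-a \leftrightarrow a$, which is just a bijection of $Z(Z_n)$). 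Since $X_a = X_{a'}$ forces $a = a'$ (compare the ratio of the two coordinates), and analogously for $Y_a$, the distinct equivalence classes are precisely the $X_a$ and $Y_a$ as $a$ runs over $Z(Z_n)$, yielding exactly $2(n - m)$ classes in total (or $2$ classes when $n$ is prime, since $Z(Z_n) = \{0\}$).

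Next, invoke the adjacency analysis from the proof of Theorem \ref{ttt1}(2): for $v_1 = u(1,a) \in X_a$ and $v_2 = v(-a',1) \in Y_{a'}$, one has $v_1 \cdot v_2 = uv(a - a')$, which is $0 \in Z_n$ if and only if $a = a'$ (because $uv \in U(Z_n)$). In particular every vertex of $X_a$ is adjacent in $ZD(R_1 \cup R_2)$ to every vertex of $Y_a$, so in $EZD(R_1 \cup R_2)$ the classes $X_a$ and $Y_a$ are adjacent. Moreover, no two $X_a$'s (respectively $Y_a$'s) are adjacent in $EZD(R_1 \cup R_2)$ since within $R_1$ (resp.\ $R_2$) the representative dot product $uu'(1 + aa')$ is a unit times $1 + aa'$, and $1 + aa'$ cannot vanish because $aa'$ is a zero divisor while $-1$ is a unit. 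Consequently each pair $\{X_a, Y_a\}$ forms a connected component isomorphic to $K_{1,1}$, and different pairs are disjoint.

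To conclude, assemble the count. For part (1), when $n$ is prime, $Z(Z_n) = \{0\}$, so there is a unique pair $\{X_0, Y_0\} = \{R_1, R_2\}$ and $EZD(R_1 \cup R_2) = K_{1,1}$. For part (2), as $a$ ranges over all $n - m$ elements of $Z(Z_n)$, we obtain $n - m$ mutually disjoint $K_{1,1}$ components. The only mildly delicate point—and arguably the main obstacle—is verifying that the classes $X_a$ (which lie in $R_1$) and $Y_a$ (which lie in $R_2$) are genuinely distinct across different values of $a$ and never accidentally coincide or merge, but this is handled by the coordinate-ratio uniqueness argument above together with the observation that $R_1 \cap R_2 = \emptyset$ when $n \ne 2$ (and the case $n = 2$ is a trivial direct check giving $K_{1,1}$).
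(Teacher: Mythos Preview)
Your proposal is correct and follows essentially the same approach as the paper: identify the $\sim$-classes with the sets $X_a$ and $Y_a$ from the proof of Theorem~\ref{ttt1}, and observe that each $K_{m,m}$ component there collapses to a $K_{1,1}$ in $EZD(R_1\cup R_2)$. The paper's proof is considerably terser (it simply cites the proof of Theorem~\ref{ttt1} and the cardinality $|X|=m$), whereas you re-derive the adjacency computation and explicitly verify distinctness of the classes; but the underlying idea is identical.
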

 \begin{proof}
{\bf (1)}. If $n$ is prime, then $E = \{\overline{(1, 0)}, \overline{(0, 1)}\}$. Thus  $EZD(R_1\cup R_2) = K_{1, 1}$.

{\bf (2)}. Suppose that $n$ is not prime, and let $A = Z_n$. For each  $a \in Z(A)$, let $X_a$ and $Y_a$ be as in the proof of Theorem \ref{ttt1}. Then $X_a, Y_a \in E(R_1 \cup R_2)$. Since $|X| = m$ for each $X \in E(R_1 \cup R_2)$,  we conclude that each $K_{m, m}$ of  $ZD(R_1\cup R_2)$ is a $K_{1, 1}$ of $EZD(R_1\cup R_2)$. Hence the claim follows by the proof of  Theorem \ref{ttt1}.
\end{proof}

\begin{rmk}
\begin{enumerate}
\item Let $A = Z_n$ and  $R = Z_n \times Z_n$. Since for each $X \in E(U(R))$ there exists an $a \in U(A)$ such that  $X = \overline{(1, a)} = \{u(1, a) \mid u \in U(A)\}$, note that we can recover the graph $UD(R)$ from the graph $EUD(R)$. However, drawing $EUD(R)$ is much simpler than drawing $UD(R)$.
\item Since for each $X \in E(R_1 \cup R_2)$ there exists an $a \in Z(Z_n)$ such that either $X = \overline{(1, a)} = \{u(1, a) \mid u \in U(Z_n) \}$ or  $X  = \overline{(a, 1)} = \{u(a, 1) \mid u \in U(Z_n) \}$, note that we can recover the graph $ZD(R_1 \cup R_2)$ from the graph $EZD(R_1 \cup R_2)$. However, drawing $EZD(R_1 \cup R_2)$ is much simpler than drawing $ZD(R_1 \cup R_2)$.
    \end{enumerate}
\end{rmk}
\begin{exa}
Let $A = Z_{20}$ and  $R = A\times A$. Then $EUD(R)$ is the disjoint union  of 4  $K_{1, 1}$ by Theorem  \ref{rtt1}(1), and thus  $UD(R)$ is the disjoint union  of 4  $K_{8, 8}$. The following is the graph of  $EUD(R)$.
\vskip0.2in
\begin{tikzpicture}[y=.2cm, x=.2cm,font=\sffamily]
\draw (0,0) -- (0,10);
\draw (15,0) -- (15,10);
\draw (30,0) -- (30, 10);
\draw (45,0)-- (45,10);

\filldraw[fill=blue!40,draw=blue!80] (0, 10) circle (3pt)    node[anchor=south] {$\overline{(1, 1)}$};
\filldraw[fill=blue!40,draw=blue!80] (15,10) circle (3pt)    node[anchor=south] {$\overline{(1, 3)}$};
\filldraw[fill=blue!40,draw=blue!80] (30, 10) circle (3pt)    node[anchor=south] {$\overline{(1, 7)}$};
\filldraw[fill=blue!40,draw=blue!80] (45,10) circle (3pt)    node[anchor=south] {$\overline{(1, 11)}$};

\filldraw[fill=blue!40,draw=blue!80] (0,0) circle (3pt)    node[anchor=north] {$\overline{(1, 19)}$};
\filldraw[fill=blue!40,draw=blue!80] (15, 0) circle (3pt)    node[anchor=north] {$\overline{(1, 13)}$};
\filldraw[fill=blue!40,draw=blue!80] (30,0) circle (3pt)    node[anchor=north] {$\overline{(1, 17)}$};
\filldraw[fill=blue!40,draw=blue!80] (45, 0) circle (3pt)    node[anchor=north] {$\overline{(1, 9)}$};
\end{tikzpicture}
\end{exa}
\begin{exa}
Let $A = Z_{34}$ and  $R = A\times A$. Then $EUD(R)$ is the disjoint union  of 7  $K_{1, 1}$'s  and  2  $K_1$'s by Theorem  \ref{rtt1}(3, Case I), and thus  $UD(R)$ is the disjoint union  of 7 $K_{16, 16}$ and 2  $K_8$. The following is the graph of  $EUD(R)$.
\vskip0.2in
\begin{tikzpicture}[y=.2cm, x=.2cm,font=\sffamily]
\draw (0,0) -- (0,10);
\draw (8,0) -- (8,10);
\draw (16,0) -- (16, 10);
\draw (24,0)-- (24,10);
\draw (32,0) -- (32,10);
\draw (40,0) -- (40, 10);
\draw (48,0)-- (48,10);

\filldraw[fill=blue!40,draw=blue!80] (0, 10) circle (3pt)    node[anchor=south] {$\overline{(1, 1)}$};
\filldraw[fill=blue!40,draw=blue!80] (8,10) circle (3pt)    node[anchor=south] {$\overline{(1, 3)}$};
\filldraw[fill=blue!40,draw=blue!80] (16, 10) circle (3pt)    node[anchor=south] {$\overline{(1, 5)}$};
\filldraw[fill=blue!40,draw=blue!80] (24,10) circle (3pt)    node[anchor=south] {$\overline{(1, 9)}$};
\filldraw[fill=blue!40,draw=blue!80] (32,10) circle (3pt)    node[anchor=south] {$\overline{(1, 23)}$};
\filldraw[fill=blue!40,draw=blue!80] (40, 10) circle (3pt)    node[anchor=south] {$\overline{(1,19 )}$};
\filldraw[fill=blue!40,draw=blue!80] (48,10) circle (3pt)    node[anchor=south] {$\overline{(1, 29)}$};

\filldraw[fill=blue!40,draw=blue!80] (0,0) circle (3pt)    node[anchor=north] {$\overline{(1, 33)}$};
\filldraw[fill=blue!40,draw=blue!80] (8, 0) circle (3pt)    node[anchor=north] {$\overline{(1, 11)}$};
\filldraw[fill=blue!40,draw=blue!80] (16,0) circle (3pt)    node[anchor=north] {$\overline{(1, 27)}$};
\filldraw[fill=blue!40,draw=blue!80] (24, 0) circle (3pt)    node[anchor=north] {$\overline{(1, 15)}$};
\filldraw[fill=blue!40,draw=blue!80] (32, 0) circle (3pt)    node[anchor=north] {$\overline{(1, 31 )}$};
\filldraw[fill=blue!40,draw=blue!80] (40,0) circle (3pt)    node[anchor=north] {$\overline{(1, 25)}$};
\filldraw[fill=blue!40,draw=blue!80] (48, 0) circle (3pt)    node[anchor=north] {$\overline{(1, 7)}$};
\filldraw[fill=blue!40,draw=blue!80] (54,5) circle (3pt)    node[anchor=north] {$\overline{(1, 13)}$};
\filldraw[fill=blue!40,draw=blue!80] (60,5) circle (3pt)    node[anchor=north] {$\overline{(1, 21)}$};

\end{tikzpicture}
\end{exa}
\noindent {\bf Acknowledgment.} We are so grateful to the referee for his/her great effort in proof reading the manuscript.

\end{document}